\documentclass[sn-mathphys,smallextended]{sn-jnl}





\RequirePackage{fix-cm}

\usepackage{graphicx}
\usepackage{multirow}
\usepackage{subcaption}
\usepackage{anyfontsize}
\newcommand{\snorm}[1]{\|#1\|^2}

\jyear{2022}

\theoremstyle{thmstyleone}

\theoremstyle{thmstyletwo}

\newtheorem{remark}{Remark}

\theoremstyle{thmstylethree}

\newtheorem{lemma}{Lemma}[section]

\raggedbottom

\begin{document}

\title{Non-conforming interface conditions for the second-order wave equation}

\author{\fnm{Gustav} \sur{Eriksson}}\email{gustav.eriksson@it.uu.se}

\affil{\orgdiv{Department of Information Technology}, \orgname{Uppsala University}, \orgaddress{\street{PO Box 337}, \city{Uppsala}, \postcode{S-751 05}, \country{Sweden}}}

\abstract{Imposition methods of interface conditions for the second-order wave equation with non-conforming grids is considered. The spatial discretization is based on high order finite differences with summation-by-parts properties. Previously presented solution methods for this problem, based on the simultaneous approximation term (SAT) method, have shown to introduce significant stiffness. This can lead to highly inefficient schemes. Here, two new methods of imposing the interface conditions to avoid the stiffness problems are presented: 1) a projection method and 2) a hybrid between the projection method and the SAT method. Numerical experiments are performed using traditional and order-preserving interpolation operators. Both of the novel methods retain the accuracy and convergence behavior of the previously developed SAT method but are significantly less stiff.}

\keywords{Summation-by-parts, High order, Non-conforming interface, Projection}

\maketitle

\section{Introduction}
It is well known that high order finite differences are highly efficient for large-scale wave propagation problems \cite{Kreiss1972}. However, the design of such schemes requires particular care at the boundaries to obtain stability. One way to obtain stable high order finite difference schemes is to use finite difference operators with a \emph{summation-by-parts} (SBP) property together with simultaneous-approximation-terms (SBP-SAT) \cite{Carpenter1994}, the projection method (SBP-P) \cite{Olsson1995a,Olsson1995} or ghost points (SBP-GP) \cite{Sjogreen2012}. SBP finite difference operators are essentially standard finite difference stencils in the interior with boundary closures carefully designed to mimic integration by parts in the discrete setting. The SBP difference operators have an associated discrete inner product such that a discrete energy equation that is analogous to the continuous equation can be derived. The boundary conditions should be imposed such that the scheme exhibits no non-physical energy growth, sometimes referred to as \emph{strict stability} \cite{Gustafsson642577}. The SAT method achieves this by adding penalty terms that weakly impose the boundary conditions such that the resulting scheme is stable, see for example \cite{DelReyFernandez2014}. The SBP-GP method adds ghost points at the boundaries and computes their values such that the boundary conditions are imposed and the scheme is stable \cite{Petersson2015,Wang2019}. The projection method derives an orthogonal projection and rewrites the problem such that it is solved in the subspace of solutions where the boundary conditions are exactly fulfilled, see \cite{Mattsson2018}. 

An important aspect of finite difference methods is the ability to split the computational domain into blocks and couple them across the interfaces. This is necessary to handle complex geometries, but also to increase the efficiency of the schemes. For example, in the case of the wave equation, a finer grid spacing is only needed in regions of the domain where the wave speed is high. In other regions, a coarser grid may be used. In general, the grid points at each side of an interface are non-conforming, in which case interpolations are used to couple the solutions. In the framework of SBP finite differences, it is crucial that the method of imposing the interpolated interface conditions preserves the SBP properties of the difference operators.

The construction of interpolation operators along with SATs to obtain stable schemes with non-conforming interfaces has received significant attention in the past \cite{Mattsson2010,Kozdon2016,Wang2016}. In \cite{Mattsson2010} so-called SBP-preserving interpolation operators (here referred to as norm-compatible) were first constructed and used to derive stable schemes for general hyperbolic and parabolic problems. However, it was noted in \cite{Wang2016,Wang2018} that the global convergence rate was decreased by one (compared to the convergence rate with conforming grids) for problems involving second derivatives in space. In \cite{Almquist2019} this is solved by constructing order-preserving (OP) interpolation operators along with SATs such that the global convergence rate is preserved. The new operators come in two norm-compatible pairs (a pair consists of one restriction operator and one prolongation operator), where one of the operators in each pair is of one order higher accuracy. Using both pairs, an SAT is presented in \cite{Almquist2019} where the first interface condition (continuity of the solution) is imposed using the accurate interpolation and the second interface condition (continuity of the first derivative) using the less accurate interpolation.

A major downside of the SBP-SAT discretizations is the necessary decomposition of the second derivative SBP operator to obtain an energy estimate. Often referred to as the "borrowing trick" \cite{Mattsson2008}. This procedure is known to introduce additional stiffness to the problem, especially for large wave speed discontinuities. The main contribution of the current work is two new methods avoiding this problem, one using SBP-P and the other a hybrid SBP-P-SAT. The analysis and numerical experiments are done on the second-order wave equation. However, the discrete Laplace operators presented are equally applicable to the heat equation and the Schrödinger equation. There are indications that the new methods can be applied to other problems, such as first-order hyperbolic systems, but this is out of the scope of the current work.

The paper is structured as follows: In Section \ref{sec: defs} some necessary definitions and the discrete operators are introduced. In Section \ref{sec: cont_analysis} the continuous problem is presented. The new semi-discrete schemes are presented in Section \ref{sec: disc_analysis}. The time discretization is presented in Section \ref{sec: time}. In Section \ref{sec: num_exp} numerical experiments validating the new methods and comparing them to the SBP-SAT schemes are presented. Conclusions are drawn in Section \ref{sec: conclusions}.
\section{Definitions}
\label{sec: defs}
Let
\begin{equation}
	\label{eq: cont_innerprod}
	(u,v) = \int _\Omega u v \: dx \quad \text{and} \quad \snorm{u} = (u,u),
\end{equation}
define an inner product and the corresponding norm for functions $u,v$ on a rectangular domain $\Omega$. The domain is split across the $x$-axis into a left and a right block, denoted $\Omega_L$ and $\Omega_R$. The two blocks are discretized using $m_x^{(u,v)}$ and $m_y^{(u,v)}$ equidistant grid points in the $x$- and $y$-directions respectively.

The second-derivatives in each block and direction are approximated using one-dimensional SBP finite difference operators \cite{Mattsson2004} satisfying
\begin{equation}
	\label{eq: 1dsbp}
	D_2 = H^{-1}(-M + e_r d_r^\top - e_l d_l^\top),
\end{equation}
where $H$ is diagonal and positive definite, $M$ is symmetric and positive semi-definite, $e_{l,r}^\top$ are row-vectors extracting the solution at the first and last grid points and $d_{l,r}^\top$ are row-vectors approximating the first derivative of the solution at the first and last grid points. The matrix $D_2$ is referred to as a $2pth$-order accurate second derivative SBP operator. In the interior $D_2$ consists of a $2p$ order accurate central finite difference stencil. On the boundaries, for the SBP properties to hold with a diagonal $H$, the order of accuracy is limited to $p$. Thus, the theoretical global order of accuracy with these operators is $\min(2p,p+2)$ \cite{Svard2019}. In this paper, numerical results are presented for 4th and 6th order SBP operators. Hence, the expected convergence rates are $4$ and $5$.

The matrix $H$ defines a one-dimensional discrete inner product and norm as
\begin{equation}
	(u,v)_H = u^\top H v \quad \text{and} \quad \snorm{u}_H = (u,u)_H.
\end{equation}
The one-dimensional operators are extended to two dimensions using Kronecker products as follows:
\begin{equation}
\begin{alignedat}{5}
	D_{2x} &= (D_2 \otimes I_{m_y}), \quad &&D_{2y} = (I_{m_x} \otimes D_2), \\
	H_x &= (H \otimes I_{m_y}), &&H_y = (I_{m_x} \otimes H), \\
	M_x &= (M \otimes I_{m_y}), &&M_y = (I_{m_x} \otimes M), \\
	e_W &= (e_l^\top \otimes I_{m_y}), &&e_E = (e_r^\top \otimes I_{m_y}), \\
	e_S &= (I_{m_x} \otimes e_l^\top), &&e_N = (I_{m_x} \otimes e_r^\top), \\
	d_W &= (d_l^\top \otimes I_{m_y}), &&d_E = (d_r^\top \otimes I_{m_y}), \\
	d_S &= (I_{m_x} \otimes d_l^\top), &&d_N = (I_{m_x} \otimes d_r^\top), \\
\end{alignedat}	
\end{equation}
where $I_m$ denotes the $m \times m$ identity matrix. The discrete inner product and norm over the 2D domain is given by
\begin{equation}
	\label{eq: disc_innerprod}
	(u,v)_{\bar H} = u^\top {\bar H} v \quad \text{and} \quad \snorm{u}_{\bar H} = (u,u)_{\bar H},
\end{equation}
where $\bar H = H_x H_y$. The discrete Laplace operator is given by
\begin{equation}
	D_L = D_{2x} + D_{2y}.
\end{equation}
Using the SBP properties \eqref{eq: 1dsbp}, the discrete two-dimensional Laplace operator can be written as
\begin{equation}
	D_L = H_x^{-1} (-M_x + e_E^\top d_E - e_W^\top d_W) + H_y^{-1} (-M_y + e_N^\top d_N - e_S^\top d_S),
\end{equation}
or for two vectors $u_{1,2} \in \mathbb{R}^{m_x m_y}$ we have
\begin{equation}
	\label{eq: disc_lapl_sbp}
	\begin{alignedat}{2}
		(u_1,D_L u_2)_{\bar H} &= -u_1^\top (H_y M_x + H_x M_y) u_2 + (e_E u_1, d_E u_2)_H - (e_W u_1, d_W u_2)_H \\
		& + (e_N u_1, d_N u_2)_H - (e_S u_1, d_S u_2)_H.
	\end{alignedat}
\end{equation}
In the upcoming analysis, let the solutions in the left block be denoted by $u$, and in the right block by $v$. Superscripts $(u)$ and $(v)$ will be used to denote which block an operator belongs to. For example, the inner-product matrix $\bar H^{(u)}$ acts on solution vectors in the left block, with $m^{(u)}_x m^{(u)}_y$ unknowns.
\subsection{Interpolation operators} 
Interpolation operators are used at the interface to couple two blocks with non-conforming grid points. Let $I_{u2v}$ denote the operator interpolating from left to right, and $I_{v2u}$ the operator interpolating from right to left. See Figure \ref{fig: domain}. For stability, we require that the pair of operators are \emph{norm-compatible}, i.e. they must satisfy
\begin{equation}
	\label{eq: disc_inter_sbppres}
	(I_{v2u} v, u)_{H^{(u)}} = (v,I_{u2v} u)_{H^{(v)}}, \quad \forall u \in \mathbb{R}^{m_y^{(u)}}, v \in \mathbb{R}^{m_y^{(v)}}.
\end{equation}	
Note that the additional constraint known as \emph{norm-contracting} \cite{Wang2016} is not needed here.

In this paper interpolation operators for a 1:2 grid ratio corresponding to 4th and 6th SBP operators \cite{Mattsson2004} are used. The traditional interpolation operators derived in \cite{Mattsson2010} are compared to the OP operators \cite{Almquist2019}. The OP operators come in two pairs: $I_{u2v}^{b}$ and $I_{v2u}^{g}$, and $I_{u2v}^{g}$ and $I_{v2u}^{b}$, where the "good" operators (superscript $g$) are one order more accurate than the "bad" operators (superscript $b$). Each pair of the OP operators satisfies \eqref{eq: disc_inter_sbppres}.
\begin{figure}[!htbp]
	\centering
	\includegraphics[width=0.6\textwidth]{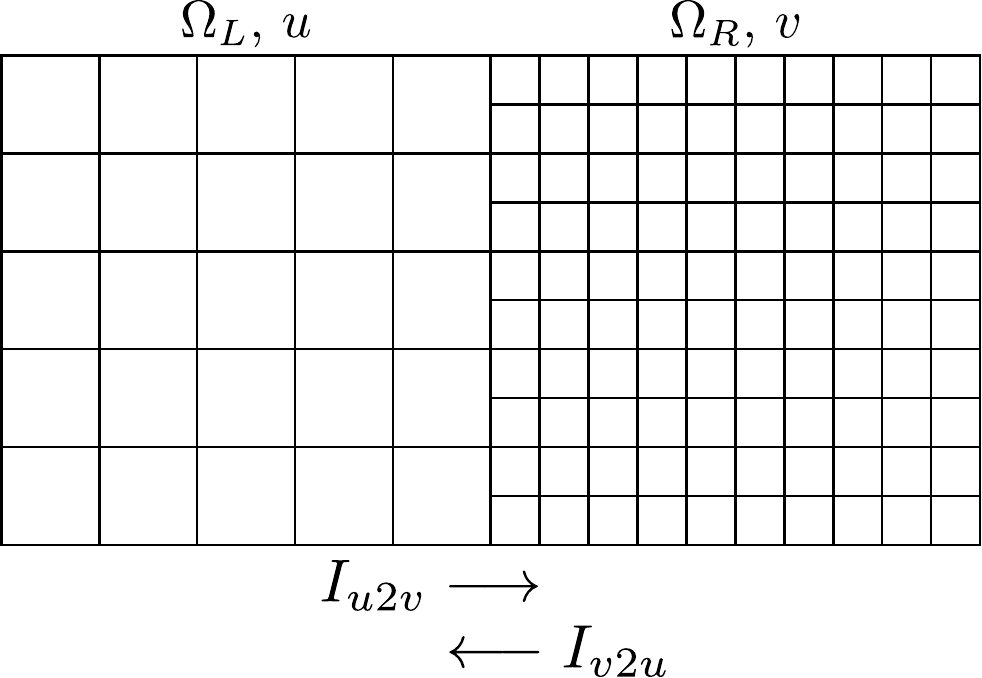}
	\caption{A two block domain with a 1:2 grid ratio non-conforming interface.}
	\label{fig: domain}
\end{figure}
\section{Continuous analysis}
\label{sec: cont_analysis}
We consider the initial-value boundary problem
\begin{equation}
		\label{eq: cont_waveeq}
	\begin{alignedat}{4}
		u_{tt} &= c_1^2 \Delta u, \quad &&(x,y) \in \Omega_L, &&&t \geq 0, \\
		v_{tt} &= c_2^2 \Delta v, &&(x,y) \in \Omega_R, &&&t \geq 0, \\
		n \cdot \nabla u &= g_u, && (x,y) \in \partial \Omega_L \setminus \partial \Omega_I, \quad &&& t \geq 0, \\
		n \cdot \nabla v &= g_v, && (x,y) \in \partial \Omega_R \setminus \partial \Omega_I, \quad &&& t \geq 0, \\
		u &= v, &&(x,y) \in \partial \Omega_I, &&&t \geq 0, \\
		c_1^2 u_x &= c_2^2 v_x, &&(x,y) \in \partial \Omega_I, &&&t \geq 0, \\
	\end{alignedat}
\end{equation}
with initial data for $u$, $u_t$, $v$, and $v_t$ at $t = 0$. Here $\partial \Omega_{L,R}$ denote the boundaries of the blocks, $\partial \Omega_I$ denotes the interface, $n$ is the outward pointing normal, $g_{u,v}$ are boundary data, and $c_1$ and $c_2$ are real, positive constants.

Multiplying the first equation in \eqref{eq: cont_waveeq} by $u_t$ and integrating over $\Omega_L$, the second equation by $v_t$ and integrating over $\Omega_R$, adding the results and using integration by parts leads to the energy equation
\begin{equation}
	\label{eq: cont_energy}
	\frac{d}{dt} E = 2c_1^2 \int _{\partial \Omega_L} n \cdot \nabla u u_t \: dS + 2c_2^2 \int _{\partial \Omega_R} n \cdot \nabla v v_t \: dS.
\end{equation}
The energy is given by
\begin{equation}
	E = \snorm{u_t} + \snorm{v_t} + c_1^2 \snorm{\nabla u} + c_2^2 \snorm{\nabla v},
\end{equation}
Inserting the interface and boundary conditions (the last four equations in \eqref{eq: cont_waveeq}) and assuming $g_{u,v} = 0$ leads to energy conservation,
\begin{equation}
	\frac{d}{dt} E = 0.
\end{equation}
This energy estimate is sufficient to show that \eqref{eq: cont_waveeq} is stable and has a unique solution.
\section{Spatial discretization}
We now turn to the spatial discretization, time is left continuous. For completeness, the boundary treatment of a single block using SBP-SAT is first presented in Section \ref{sec: singleblock}. Then, in Section \ref{sec: multiblock}, the novel discretizations of the multi-block problem \eqref{eq: cont_waveeq} are presented.
\label{sec: disc_analysis}
\subsection{Single-block analysis}
\label{sec: singleblock}
Consider the initial-value boundary problem on the rectangular two-dimensional domain $\Omega$ given by
\begin{equation}
	\label{eq: cont_waveeq_1block}
	\begin{alignedat}{4}
		u_{tt} &= c^2 \Delta u, \quad &&(x,y) \in \Omega, &&&t \geq 0, \\
		n \cdot \nabla u &= g, && (x,y) \in \partial \Omega, \quad &&& t \geq 0, \\
	\end{alignedat}
\end{equation}
with initial data for $u$ and $u_t$. Discretize $\Omega$ into a Cartesian grid and let $v$ denote a column-major ordered semi-discrete solution vector. A consistent semi-discrete approximation of \eqref{eq: cont_waveeq_1block} with boundary condition imposed using the SAT method \cite{Mattsson2009} is given by
\begin{equation}
	\label{eq: disc_single_block_ode}
	\begin{alignedat}{2}
		v_{tt} = c^2 D_L u &+ c^2 \bar H_x^{-1} e_W^\top (d_W u - g_W) - c^2 \bar H_x^{-1} e_E^\top (d_E u - g_E) \\
		&+ c^2 \bar H_y^{-1} e_S^\top (d_S u - g_S) - c^2 \bar H_y^{-1} e_N^\top (d_N u - g_N),
	\end{alignedat}
\end{equation}
where $g_{W,E,S,N}$ are vectors of $g$ evaluated on the boundary grid points. Taking the inner product \eqref{eq: disc_innerprod} between $v_t$ and \eqref{eq: disc_single_block_ode}, and using \eqref{eq: disc_lapl_sbp} results in
\begin{equation}
\begin{alignedat}{2}
		(v_t,v_{tt})_{\bar H} = -c^2 v_t^\top (H_y M_x + H_x M_y) v &- c^2 (e_W v_t,g_W)_H + c^2 (e_E v_t,g_E) \\
		&- c^2 (e_S v_t, g_S)_H + c^2 (e_N v_t, g_N)_H.
\end{alignedat}
\end{equation}
Setting $g_{W,E,S,N} = 0$ and adding the transpose leads to the energy equation
\begin{equation}
	\label{eq: 1block_enest}
	\frac{d}{dt} E = 0,
\end{equation}
where
\begin{equation}
	\label{eq: 1block_E}
	E = ||v_t||_{\bar H} + c^2 v^\top (H_y M_x + H_x M_y) v.
\end{equation}
Since $E \geq 0$ it defines an energy, and the energy equation \eqref{eq: 1block_enest} shows that it is conserved over time.
\subsection{Multi-block analysis}
\label{sec: multiblock}
We now consider the multi-block problem \eqref{eq: cont_waveeq}. To make the analysis more readable, it is assumed that the boundary conditions in both blocks are treated as described in Section \ref{sec: singleblock}. Thus, the terms corresponding to outer boundaries are left out. 

Denote by $u$ and $v$ column-major ordered semi-discrete solution vectors in the left and right blocks respectively. Let $w = \begin{bmatrix}
	u \\ v
\end{bmatrix}$ be the global semi-discrete solution vector. Discretizing \eqref{eq: cont_waveeq} in space without imposing the interface conditions yields
\begin{equation}
	\label{eq: disc_waveeq}
		\begin{alignedat}{2}
			w_{tt} &= D w, \\
			L w &= 0,
	\end{alignedat}
\end{equation}
where
\begin{equation}
	\label{eq: disc_DL}
	D = \begin{bmatrix} c_1^2 D_L^{(u)} & 0 \\ 0 & c_2^2 D_L^{(v)}\end{bmatrix},
\end{equation}
and $L$ is a linear operator approximating the interface conditions (for now $L$ is left unspecified).

The interface conditions $Lw = 0$ are imposed using SBP-P-SAT or SBP-P. The resulting problem with both methods can be written as
\begin{equation}
	\label{eq: disc_waveeq_ode}
	w_{tt} = P \tilde D P w,
\end{equation}
where 
\begin{equation}
	\label{eq: mod_SBP}
	\tilde D = D + SAT,
\end{equation}
 is a modified spatial operator and $P$ is a projection operator given by
\begin{equation}
	\label{eq: disc_P_op}
	P = I - \hat H^{-1} L^\top (L \hat H^{-1} L^\top)^{-1} L.
\end{equation}
By construction, $P$ is the orthogonal projection operator with respect to the global inner product $(\cdot,\cdot)_{\hat H}$, where
\begin{equation}
	\label{eq: disc_full_H}
	\hat H = \begin{bmatrix} \bar{H}^{(u)} & 0 \\ 0 & \bar{H}^{(v)} \end{bmatrix},
\end{equation}
i.e., it satisfies the self-adjoint property
\begin{equation}
	\label{eq: disc_p_selfadjoint}
	(w_1,Pw_2)_{\hat{H}} = (Pw_1,w_2)_{\hat{H}}, \quad \forall w_{1,2} \in \mathbb{R}^{m_x^{(u)} m_y^{(u)} + m_x^{(v)} m_y^{(v)}}.
\end{equation}
For more details on the projection method see \cite{Olsson1995a,Olsson1995,Mattsson2018}. See also \cite{Mattsson2006} for examples of the projection method used for interface conditions.

\subsection{Stability with SBP-P}
\label{subsec: sbp-p stab}
We begin by considering only the projection method to impose the interface conditions, where $SAT = 0$. Let $\hat w = \begin{bmatrix} \hat u \\ \hat v \end{bmatrix} = P w$ denote the projected solution vector. Taking the inner product between $w_t$ and \eqref{eq: disc_waveeq_ode}, and using \eqref{eq: disc_lapl_sbp} and \eqref{eq: disc_p_selfadjoint} leads to
\begin{equation}
	\label{eq: disc_enest}
	\frac{d}{dt} E = 2 c_1^2 (e_E \hat u_t, d_E \hat u)_{H^{(u)}} - 2 c_2^2 (e_W \hat v_t, d_W \hat v)_{H^{(v)}},
\end{equation}
where $E$ is an energy given by
\begin{equation}
	\label{eq: disc_energy}
	\begin{alignedat}{2}
		E &= \snorm{u_t}_{\bar H^{(u)}} + c_1^2 \hat u^\top (H_y^{(u)} M_x^{(u)} + H_x^{(u)} M_y^{(u)}) \hat u  \\
		 &+ \snorm{v_t}_{\bar H^{(v)}} + c_2^2 \hat v^\top (H_y^{(v)} M_x^{(v)} + H_x^{(v)} M_y^{(v)}) \hat v \geq 0.
	\end{alignedat}
\end{equation}	
The semi-discrete energy equation \eqref{eq: disc_enest} is the discrete analog to \eqref{eq: cont_energy}. The following lemma is the first main result of this paper:
\begin{lemma}
	The ODE \eqref{eq: disc_waveeq_ode} with $L$ given by
	\begin{equation}
	\label{eq: disc_nonconf_L_proj}
		L = \begin{bmatrix}
		e_E & -I_{v2u} e_W \\ c_1^2 I_{u2v} d_E^{(u)} & -c_2^2 d_W^{(v)}
	\end{bmatrix},
	\end{equation}
	is a stable approximation of \eqref{eq: cont_waveeq} with a non-conforming interface.
\end{lemma}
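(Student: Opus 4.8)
The plan is to prove stability by the energy method, exploiting the fact that a non-negative conserved energy is essentially already in hand. With $SAT = 0$ we have $\tilde D = D$, so the scheme \eqref{eq: disc_waveeq_ode} reads $w_{tt} = PDPw$, and the functional $E$ in \eqref{eq: disc_energy} satisfies $E \geq 0$. Since the semi-discrete energy equation \eqref{eq: disc_enest} has already been derived, proving stability reduces to showing that its right-hand side, $2c_1^2(e_E\hat u_t, d_E\hat u)_{H^{(u)}} - 2c_2^2(e_W\hat v_t, d_W\hat v)_{H^{(v)}}$, vanishes for the given $L$. This yields $\frac{d}{dt}E = 0$, so the energy is bounded (in fact conserved), which together with $E \geq 0$ establishes stability.

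The first step is to extract the algebraic constraints enforced by the projection. From the definition \eqref{eq: disc_P_op} one checks that $LP = 0$ identically, so the projected vector $\hat w = Pw$ satisfies $L\hat w = 0$; and because $P$ is independent of $t$, we also have $\hat w_t = Pw_t$ and hence $L\hat w_t = 0$. Reading off the two block-rows of \eqref{eq: disc_nonconf_L_proj} gives the discrete interface relations
\[
	e_E\hat u = I_{v2u}e_W\hat v, \qquad c_1^2 I_{u2v}d_E\hat u = c_2^2 d_W\hat v,
\]
together with the identical relations obtained by replacing $\hat u,\hat v$ with $\hat u_t,\hat v_t$. These are precisely the discrete analogues of continuity of the solution and of the flux $c_1^2 u_x = c_2^2 v_x$ across the interface.

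The heart of the proof is a short cancellation applied to the first interface term in \eqref{eq: disc_enest}. I would substitute the solution-continuity relation for the time derivative, $e_E\hat u_t = I_{v2u}e_W\hat v_t$, and then invoke norm-compatibility \eqref{eq: disc_inter_sbppres} to transfer the interpolation from the time-derivative factor onto the flux factor, turning $I_{v2u}$ into $I_{u2v}$:
\[
	c_1^2(e_E\hat u_t, d_E\hat u)_{H^{(u)}} = c_1^2(I_{v2u}e_W\hat v_t, d_E\hat u)_{H^{(u)}} = (e_W\hat v_t,\, c_1^2 I_{u2v}d_E\hat u)_{H^{(v)}}.
\]
Applying the flux relation $c_1^2 I_{u2v}d_E\hat u = c_2^2 d_W\hat v$ identifies this with $c_2^2(e_W\hat v_t, d_W\hat v)_{H^{(v)}}$, so the two terms in \eqref{eq: disc_enest} are equal and their difference is zero. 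Hence $\frac{d}{dt}E = 0$, and since $E \geq 0$ the scheme is stable.

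I expect the main obstacle to be getting the interplay between the two interpolation operators exactly right. The cancellation succeeds only because norm-compatibility pairs $I_{v2u}$ with $I_{u2v}$, and $L$ in \eqref{eq: disc_nonconf_L_proj} is constructed so that the solution-continuity row carries $I_{v2u}$ while the flux-continuity row carries $I_{u2v}$; had the operators been placed differently the term would not telescope. I would therefore track carefully which interface and which block each factor lives on, so that the inner products $H^{(u)}$ and $H^{(v)}$ and the interpolation directions match the hypothesis of \eqref{eq: disc_inter_sbppres}, and verify that the constraints genuinely transfer to $\hat w_t$ via $LP = 0$, since this is what allows the argument to close.
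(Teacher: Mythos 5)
Your proof is correct and follows essentially the same route as the paper: extract the interface relations from $L\hat w = 0$ (and its time derivative, via $LP = 0$), substitute them into the energy rate \eqref{eq: disc_enest}, and invoke norm-compatibility \eqref{eq: disc_inter_sbppres} so that the two interface terms cancel, giving $\frac{d}{dt}E = 0$ with $E \geq 0$. The only differences are cosmetic — you apply norm-compatibility before the flux substitution rather than after, and you make explicit the (correct) observation that the constraints also hold for $\hat w_t$, a point the paper leaves implicit.
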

\begin{proof}
	Since $L \hat w = L P w = 0$, we have 
	\begin{equation}
		e_E \hat u = I_{v2u} e_W \hat v \quad \text{and} \quad c_2^2 d_W^{(v)} \hat v = c_1^2 I_{u2v} d_E^{(u)} \hat u.
	\end{equation} 
	Substituted into \eqref{eq: disc_enest} results in
	\begin{equation}
		\label{eq: disc_nonconfen}
		\frac{d}{dt} E = 2 c_1^2 ((I_{v2u} e_W \hat v_t, d_E^{(u)} \hat u)_{H^{(u)}} - (e_W \hat v_t, I_{u2v} d_E^{(u)} \hat u)_{H^{(v)}}).
	\end{equation}
	Using that the interpolation operators are norm-compatible, i.e., that they satisfy \eqref{eq: disc_inter_sbppres}, we get
	\begin{equation}
		\frac{d}{dt} E = 0,
	\end{equation}
	which proves stability.
\end{proof}
\subsection{Stability with SBP-P-SAT}
\label{subsec: sbp-p-sat stab}
With the hybrid method, the continuity of the first derivative is imposed using a SAT given by
\begin{equation}
	\label{eq: disc_SAT}
	SAT = \begin{bmatrix}
		0 & 0 \\ -(H_x^{(v)})^{-1} e_W^\top c_1^2 I_{u2v} d_E^{(u)} & (H_x^{(v)})^{-1} e_W^\top c_2^2 d_W^{(v)}
	\end{bmatrix}.
\end{equation}
This corresponds to imposing the interface condition
\begin{equation}
	c_2^2 d_W^{(v)} \hat v = c_1^2 I_{u2v} d_E^{(u)} \hat u,
\end{equation}
weakly on the equation for $v$.

Using the modified spatial operator \eqref{eq: mod_SBP}, the energy equation becomes
\begin{equation}
	\label{eq: disc_enest2}
	\frac{d}{dt} E = 2 c_1^2 ((e_E \hat u_t, d_E \hat u)_{H^{(u)}} - (e_W \hat v_t,I_{u2v} d_E \hat u)_{H^{(v)}}),
\end{equation}
where $\hat w = \begin{bmatrix} \hat u \\ \hat v \end{bmatrix} = P w$ and $E$ is given by \eqref{eq: disc_energy}. The following lemma is the second main result of this paper:
\begin{lemma}
	The ODE \eqref{eq: disc_waveeq_ode} with $L$ given by
	\begin{equation}
	\label{eq: disc_nonconf_L_hyb}
		L = \begin{bmatrix}
		e_E & -I_{v2u} e_W
	\end{bmatrix},
	\end{equation}
	and $SAT$ given by \eqref{eq: disc_SAT} is a stable approximation of \eqref{eq: cont_waveeq} with a non-conforming interface.
\end{lemma}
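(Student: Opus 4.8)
The plan is to mirror the structure of the proof of the preceding lemma, exploiting that the energy equation \eqref{eq: disc_enest2} for the hybrid scheme has already been derived and that the SAT contribution has been folded into its right-hand side. The only task that remains is to show that the two boundary terms in \eqref{eq: disc_enest2} cancel, using the single constraint encoded in $L$ together with norm-compatibility.

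First I would extract the constraint imposed by the projection. Since $P$ is the orthogonal projector in \eqref{eq: disc_P_op}, it satisfies $LP = 0$, so $L\hat w = LPw = 0$ for every $w$. With $L$ given by \eqref{eq: disc_nonconf_L_hyb}, this reads $e_E \hat u = I_{v2u} e_W \hat v$, i.e.\ continuity of the solution across the non-conforming interface. Because $L$ and $P$ are constant in time, differentiating gives $e_E \hat u_t = I_{v2u} e_W \hat v_t$; this is the key identity, and it is the analog of the solution-continuity relation used in the previous proof. Note that, in contrast to that proof, the second interface condition (continuity of the scaled normal derivative) is \emph{not} extracted from $L$ here, since it is imposed weakly through the SAT in \eqref{eq: disc_SAT}.

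Next I would substitute this derivative identity into the first term on the right-hand side of \eqref{eq: disc_enest2}, replacing $e_E \hat u_t$ by $I_{v2u} e_W \hat v_t$, so that the first term becomes $2 c_1^2 (I_{v2u} e_W \hat v_t, d_E \hat u)_{H^{(u)}}$. Applying the norm-compatibility relation \eqref{eq: disc_inter_sbppres} with the choice $v = e_W \hat v_t$ and $u = d_E \hat u$ moves the interpolation across the inner product, yielding $2 c_1^2 (e_W \hat v_t, I_{u2v} d_E \hat u)_{H^{(v)}}$, which is exactly the second term in \eqref{eq: disc_enest2}. Hence $\frac{d}{dt}E = 0$, and since $E \geq 0$ by \eqref{eq: disc_energy}, the energy is bounded for all time, which establishes stability.

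I do not expect a genuine obstacle here: the effect of the weakly imposed derivative condition is already visible in the asymmetric form of \eqref{eq: disc_enest2}, where the second term carries $I_{u2v} d_E \hat u$ rather than $d_W \hat v$, so the SAT never enters the cancellation argument directly. The one point requiring care is the justification of the differentiated constraint $e_E \hat u_t = I_{v2u} e_W \hat v_t$, which relies on $L$ being time-independent and on $LP = 0$; once this is in hand, the argument reduces to a single application of norm-compatibility.
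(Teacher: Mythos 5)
Your proposal is correct and follows essentially the same route as the paper: extract $e_E \hat u = I_{v2u} e_W \hat v$ from $L\hat w = LPw = 0$, substitute into the energy equation \eqref{eq: disc_enest2}, and cancel the two boundary terms via a single application of norm-compatibility \eqref{eq: disc_inter_sbppres}. Your explicit justification of the time-differentiated constraint $e_E \hat u_t = I_{v2u} e_W \hat v_t$ (via time-independence of $L$ and $P$) is a detail the paper leaves implicit, but it is the same argument.
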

\begin{proof}
	Since $L \hat w = L P w = 0$ we have 
	\begin{equation}
		e_E \hat u = I_{v2u} e_W \hat v.
	\end{equation} 
	Substituted into \eqref{eq: disc_enest2} results in
	\begin{equation}
		\label{eq: disc_nonconfen}
		\frac{d}{dt} E = 2 c_1^2 ((I_{v2u} e_W \hat v_t, d_E \hat u)_{H^{(u)}} - (e_W \hat v_t,I_{u2v} d_E \hat u)_{H^{(v)}}).
	\end{equation}
	Using that the interpolation operators are norm-compatible, i.e., that they satisfy \eqref{eq: disc_inter_sbppres}, we get
	\begin{equation}
		\frac{d}{dt} E = 0,
	\end{equation}
	which proves stability.
\end{proof}
\begin{remark}
	With both SBP-P and SBP-P-SAT the key to obtaining energy stability is the mirrored interpolations of the interface conditions. In Sections \ref{subsec: sbp-p stab} and \ref{subsec: sbp-p-sat stab}, continuity of the solution is imposed by interpolating right to left and the continuity of the first derivative by interpolating left to right. Conservative energy estimates can also be obtained by swapping the interpolations and using the transpose of \eqref{eq: disc_inter_sbppres}. With only projection we have
	\begin{equation}
	L = \begin{bmatrix}
		I_{u2v} e_E & -e_W \\ c_1^2 d_E^{(u)} & -c_2^2 I_{v2u} d_W^{(v)}
	\end{bmatrix},
	\end{equation}
	and with the hybrid method
	\begin{equation}
	L = \begin{bmatrix}
		I_{u2v} e_E & -e_W
	\end{bmatrix},
	\end{equation}
	and 
	\begin{equation}
	\label{eq: disc_SAT2}
	SAT = \begin{bmatrix}
		 -(H_x^{(u)})^{-1} e_E^\top c_1^2 d_E & (H_x^{(u)})^{-1} e_E^\top c_2^2 I_{v2u} d_W\\ 0 & 0
	\end{bmatrix}.
\end{equation}
	Numerical experiments have shown that the differences between the choices in terms of accuracy and stiffness are minor, and dependent on the SBP and interpolation operators used. The results presented in this paper are obtained using the discretizations presented in Sections \ref{subsec: sbp-p stab} and \ref{subsec: sbp-p-sat stab}.
\end{remark}
\subsection{Order preserving interpolation}
Taking inspiration from \cite{Almquist2019}, we note that continuity of the solution should be imposed using the "good" interpolation operators and that continuity of the first derivative should be imposed using the "bad" operators. For the discretizations in Sections \ref{subsec: sbp-p stab} and \ref{subsec: sbp-p-sat stab}, this amounts to replacing $I_{v2u}$ with $I_{v2u}^g$ and $I_{u2v}$ with $I_{u2v}^b$ in \eqref{eq: disc_nonconf_L_proj}, \eqref{eq: disc_SAT}, and \eqref{eq: disc_nonconf_L_hyb}. Note that with SBP-P and SBP-P-SAT only one pair of the OP interpolation operators is used, whereas the SBP-SAT discretization requires both pairs.
\section{Time discretization}
\label{sec: time}
All methods considered can be written as a system of second-order ODEs, given by
\begin{equation}
	\label{eq: ODE_system}
	\begin{alignedat}{2}
		w_{tt} &= Q w + G(t), \quad &&t > 0,\\
		w(t) &= f_1, && t = 0, \\
		w_t(t) &= f_2, && t = 0, \\
	\end{alignedat}
\end{equation}
where $Q$ is a matrix approximating the spatial derivatives including boundary and interface conditions and $G(t)$ contains the boundary data. In this paper \eqref{eq: ODE_system} is solved using an explicit 4th order time-marching scheme \cite{Mattsson2006}, given by
\begin{equation}
	\begin{alignedat}{1}
		w^{(0)} &= f_1, \\
		w^{(1)} &= (I + \frac{k^2}{2} Q) f_1 + k(I + \frac{k^2}{6} Q) f_2 + \frac{k^2}{2} G(0) + \frac{k^3}{6}G_t(0), \\
		w^{(n+1)} &= (2I + k^2 Q + \frac{k^4}{12} Q^2) w^{(n)} - w^{(n-1)} \\
		&+ k^2 (I + \frac{k^2}{12} Q) G(t_n) + \frac{k^4}{12} G_{tt}(t_n),
	\end{alignedat}
\end{equation}
where $I$ is the identity matrix, $k$ denotes the time step, and $t_n = nk$, $n = 0,1,...$, is the discrete time-level. It can be shown that the scheme is stable if 
\begin{equation}
	k^2 \rho(Q) < 12,
\end{equation}
where $\rho(Q)$ denotes the spectral radius of $Q$. Introducing the undivided matrix $\tilde Q = h^2 Q$, where $h$ is the spatial interval, we get the stability condition
\begin{equation}
	k < \sqrt{\frac{12}{\rho(\tilde Q)}} h.
\end{equation}
The scaled spectral radius $\rho(\tilde Q)$ depends on the discretization method, but not on the spatial interval $h$ (for large enough problems). Therefore, comparing the scaled spectral radius of the methods gives a good indication of the required time steps, and consequently the overall efficiency of the schemes. 
\section{Numerical experiments}
\label{sec: num_exp}
In this section numerical experiments are presented comparing the new discretizations to the SAT discretizations presented in \cite{Wang2018} (traditional interpolation) and \cite{Almquist2019} (OP interpolation). Neumann boundary conditions imposed using the SAT method are used for all results, as described in Section \ref{sec: singleblock}. The domain is given by $[-10,10] \times [0,10]$ with an interface at $x = 0$. The left and right blocks are discretized with $m$ and $2m - 1$ grid points in each dimension.

The methods are compared in terms of efficiency (measured by the spectral radius) in Section \ref{sec: specrad}, and accuracy for a problem with a known analytical solution in Section \ref{sec: accuracy}.
\subsection{Spectral radius}
\label{sec: specrad}
In Table \ref{tabl: specrad} the scaled spectral radius of the SBP-P, SBP-P-SAT, and SBP-SAT schemes are presented for the 4th and 6th order SBP operators with traditional and order-preserving interpolation operators. As a reference, the scaled spectral radius of the single-block discretization \eqref{eq: disc_single_block_ode} is also included. 

With all operators, the spectral radius with SBP-P and SBP-P-SAT are significantly lower than with SBP-SAT. Furthermore, the scaled spectral radii obtained with SBP-P and SBP-P-SAT are the same as for the single-block discretization with Neumann boundary conditions. This shows that the spectral radius with SBP-P and SBP-P-SAT is unaffected by the interface coupling procedure. This is not true for SBP-SAT. As an illustration, for the 6th order OP operators with a given grid resolution, approximately 2.5 times larger time steps can be used with SBP-P or SBP-P-SAT compared to SBP-SAT. For the 4th order OP operators, the ratio is approximately 6.6.
\begin{remark}
	The SBP-SAT schemes involve tuning the value of a parameter. Typically, increasing its value leads to a more accurate scheme (up to a point) at the cost of increasing the spectral radius. How to choose this parameter is not obvious, and one unclear aspect of the SAT method. The results in this paper are obtained using the same values as in \cite{Wang2018} and \cite{Almquist2019}. 
\end{remark}

\begin{table}
\center
\caption{Scaled spectral radius of RHS matrix with order- and non-order-preserving interpolation operators with projection (SBP-P), hybrid projection and SAT (SBP-P-SAT), and SAT (SBP-SAT) for 4th and 6th order SBP operators. The final column shows the scaled spectral radius of the single-block discretization with 4th and 6th order SBP operators.}
\label{tabl: specrad}
\begin{tabular}{|c|c|c|c|c|} 
\hline
Operators & SBP-P & SBP-P-SAT & SBP-SAT & Single-block\\
\hline
Traditional 4th order & 10.66 & 10.66 & 57.21 & \multirow{2}{*}{10.66} \\
Order-preserving 4th order & 10.66 & 10.66 & 467.82 & \\
\hline
Traditional 6th order & 28.36 & 28.36 & 133.41 & \multirow{2}{*}{28.36} \\
Order-preserving 6th order & 28.36 & 28.36 & 180.81 & \\
\hline
\end{tabular}
\end{table}

\subsection{Accuracy}
\label{sec: accuracy}
In this section the accuracy of the methods is compared using an analytical solution given by
\begin{equation}
	\label{eq: cont_analyticsol}
	\begin{alignedat}{2}
		u &= \cos(x + y - \sqrt{2} c_1 t) + k_2 \cos(x - y + \sqrt{2} c_1 t), \\
		v &= (1 + k_2) \cos(k_1 x + y + \sqrt{2} c_1 t),
	\end{alignedat}
\end{equation}
where $k_1 = \sqrt{2 c_1^2/c_2^2-1}$ and $k_2 = (c_1^2 - c_2^2 k_1)/(c_1^2 + c_2^2 k_1)$. The wave speeds are set to $c_1 = 1$ and $c_2 = 0.5$. The boundary and initial data are given by \eqref{eq: cont_analyticsol}. The time step is chosen as one tenth of the largest stable time step (with this choice the temporal errors are insignificant in comparison to the spatial errors). The convergence rate is approximated as
\begin{equation}
	q = \frac{\log(\frac{e_1}{e_2})}{\log(\frac{m_1}{m_2})},
\end{equation}
where $e_1$ and $e_2$ are errors in the $H$-norm \eqref{eq: disc_full_H} at $t = 2$ of two separate simulations with $m = m_1$ and $m = m_2$.

In Table \ref{tabl: errconv} the error and convergence results of the SBP-P, SBP-P-SAT, and SBP-SAT discretizations are presented for the 4th and 6th order traditional and OP interpolation operators. Overall the accuracy of the  SBP-P, SBP-P-SAT, and SBP-SAT schemes are very similar. With the traditional interpolation operators, 3rd and 4th order convergence are obtained with the 4th and 6th order operators respectively. And, with the order-preserving interpolation operators, convergence rates 4 and 5 are obtained. This shows that SBP-P and SBP-P-SAT exhibit the same convergence behaviors as previously observed with SBP-SAT, where the traditional interpolation operators lead to an order reduction whereas the OP interpolation operators retain the full convergence rates. One stand-out result is the accuracy with the 6th order traditional interpolation operators. With SBP-P and SBP-P-SAT, the errors with $m = 801$ are smaller by almost one magnitude compared to the error with SBP-SAT. 

\begin{table}
\caption{Error (in base 10 logarithm) and convergence of 4th and 6th order traditional and order-preserving interpolation and SBP operators with projection (subscript $p$), hybrid (subscript $h$), and SAT (subscript $s$) discretizations.}
\label{tabl: errconv}
\begin{subtable}[h]{1\textwidth}
\centering
\caption{Traditional 4th order}
\begin{tabular}{|c||c|c||c|c||c|c|} 
\hline
$m$ & $e_p$ & $q_p$ & $e_h$ & $q_h$ & $e_s$ & $q_s$   \\ 
\hline
26  & -1.74 & - 	 & -1.75 & - & -1.74 & - 	  \\ 
\hline
51  & -2.97 & -4.18 & -2.98 & -4.18 & -2.93 & -4.05  \\ 
\hline
101 & -4.09 & -3.76 & -4.10 & -3.74 & -3.96 & -3.46  \\ 
\hline
201 & -5.09 & -3.33 & -5.09 & -3.33 & -4.93 & -3.25   \\ 
\hline
401 & -6.02 & -3.10 & -6.02 & -3.10 & -5.89 & -3.20  \\ 
\hline
801 & -6.93 & -3.03 & -6.93 & -3.03 & -6.79 & -2.99  \\ 
\hline
\end{tabular}
\end{subtable}
\begin{subtable}[h]{1\textwidth}
\centering
\vspace{0.3cm}
\caption{Order-preserving 4th order}
\begin{tabular}{|c||c|c||c|c||c|c|} 
\hline
$m$ & $e_p$ & $q_p$ & $e_h$ & $q_h$ & $e_s$ & $q_s$   \\ 
\hline
26  & -1.77 & - 	 & -1.78 & - & -1.78 & -  \\ 
\hline
51  & -3.04 & -4.32 & -3.05 & -4.33 & -3.06 & -4.36  \\ 
\hline
101 & -4.28 & -4.15 & -4.28 & -4.14 & -4.30 & -4.15 \\ 
\hline
201 & -5.51 & -4.12 & -5.52 & -4.12 & -5.54 & -4.14  \\ 
\hline
401 & -6.73 & -4.06 & -6.73 & -4.05 & -6.76 & -4.06  \\ 
\hline
801 & -7.94 & -4.03 & -7.95 & -4.03 & -7.97 & -4.03  \\ 
\hline
\end{tabular}
\end{subtable}
\begin{subtable}[h]{1\textwidth}
\centering
\vspace{0.3cm}
\caption{Traditional 6th order}
\begin{tabular}{|c||c|c||c|c||c|c|} 
\hline
$m$ & $e_p$ & $q_p$ & $e_h$ & $q_h$ & $e_s$ & $q_s$   \\ 
\hline
26 & -1.93 & -  & -1.89 & - & -1.86 & - \\ 
\hline
51 & -3.62 & -5.74 & -3.61 & -5.82 & -3.49 & -5.54  \\ 
\hline
101 & -5.24 & -5.44 & -5.23 & -5.45 & -4.91 & -4.76 \\ 
\hline
201 & -6.79 & -5.19 & -6.79 & -5.23 & -6.17 & -4.20  \\ 
\hline
401  & -8.23 & -4.78 & -8.23 & -4.78 & -7.35 & -3.93 \\ 
\hline
801  & -9.53 & -4.34 & -9.53 & -4.33 & -8.57 & -4.06 \\ 
\hline
\end{tabular}
\end{subtable}
\begin{subtable}[h]{1\textwidth}
\centering
\vspace{0.3cm}
\caption{Order-preserving 6th order}
\begin{tabular}{|c||c|c||c|c||c|c|} 
\hline
$m$ & $e_p$ & $q_p$ & $e_h$ & $q_h$ & $e_s$ & $q_s$   \\ 
\hline
26  & -1.93 & - 	 & -1.89 & - & -1.87 & - \\ 
\hline
51  & -3.63 & -5.78 & -3.62 & -5.86 & -3.59 & -5.83  \\ 
\hline
101 & -5.28 & -5.52 & -5.27 & -5.52 & -5.23 & -5.51 \\ 
\hline
201 & -6.89 & -5.40 & -6.89 & -5.43 & -6.86 & -5.44  \\ 
\hline
401 & -8.48 & -5.28 & -8.48 & -5.29 & -8.47 & -5.36 \\ 
\hline
801 & -9.99 & -5.03 & -10.10 & -5.07 & -10.04 & -5.23 \\ 
\hline
\end{tabular}
\end{subtable}
\end{table}
\section{Conclusions}
\label{sec: conclusions}
Two new SBP finite difference discretizations of the second-order wave equation with non-conforming grid interfaces are presented. The first scheme utilizes the projection method to impose the interface conditions and the second scheme a hybrid projection-SAT method. Energy conservation is shown for both discretizations using the energy method. Numerical experiments with traditional and order-preserving interpolation operators demonstrate similar accuracy and convergence behavior as for the SAT schemes. The most significant advantage of the new methods compared to SAT is the reduced spectral radius of the spatial operators. The new methods are less stiff than the SAT schemes, allowing for several times larger time steps with explicit time integration methods. Furthermore, it is found that the stiffness of the new schemes is the same as without the interface altogether, i.e., it is unaffected by the coupling procedure. Although the analysis and numerical experiments are done for the second-order wave equation, the discrete Laplace operator presented here can be directly applied to the heat equation and the Schrödinger equation. In a future study, the ideas introduced in this paper will be extended to general hyperbolic systems.

\clearpage
\bibliography{references}

\section*{Statements and Declarations}
\noindent\textbf{Funding } The author did not receive support from any organization for the submitted work.
\\
\noindent\textbf{Conflict of interest } The author has no conflicts of interest to declare that are relevant to the content of this article.

\noindent\textbf{Data availability } Data sharing not applicable to this article as no datasets were generated or analyzed during the current study.

\end{document}